\title[Bisymmetric polynomial functions]{A classification of bisymmetric polynomial functions over integral domains of characteristic zero}
\author{Jean-Luc Marichal}
\address{Mathematics Research Unit, FSTC, University of Luxembourg, 6, rue Coudenhove-Kalergi, L-1359 Luxembourg, Luxembourg}
\email{jean-luc.marichal[at]uni.lu}
\author{Pierre Mathonet}
\address{University of Li\`ege, Department of Mathematics, Grande Traverse, 12 - B37, B-4000 Li\`ege, Belgium}
\email{p.mathonet[at]ulg.ac.be }
\date{March 23, 2012}
\begin{document}

\theoremstyle{plain}
\newtheorem{theorem}{Theorem}%[section]% Supprimer [section] pour une numérotation linéaire
\newtheorem{lemma}[theorem]{Lemma}
\newtheorem{proposition}[theorem]{Proposition}
\newtheorem{corollary}[theorem]{Corollary}
\newtheorem{fact}[theorem]{Fact}
\newtheorem{claim}[theorem]{Claim}
\newtheorem*{main}{Main Theorem}

\theoremstyle{definition}
\newtheorem{definition}[theorem]{Definition}
\newtheorem{example}[theorem]{Example}

\theoremstyle{remark}
\newtheorem{remark}{Remark}
\newtheorem{step}{Step}

\newcommand{\N}{\mathbb{N}}
\newcommand{\R}{\mathbb{R}}
\newcommand{\Z}{\mathbb{Z}}
\newcommand{\Q}{\mathbb{Q}}
\newcommand{\C}{\mathbb{C}}
\newcommand{\Id}{\mathrm{Id}}
\newcommand{\Vspace}{\vspace{2ex}}
\newcommand{\bfx}{\mathbf{x}}
\newcommand{\bfy}{\mathbf{y}}
\newcommand{\bfh}{\mathbf{h}}
\newcommand{\bfe}{\mathbf{e}}
\newcommand{\p}{\mathbf{p}}
\newcommand{\bfp}{\mathbf{p}}
\newcommand{\Imp}{\mathcal{I}}
\newcommand{\Sy}{{\mathcal{S}}}
\newcommand{\A}{{\mathcal{A}}}
\newcommand{\bfo}{\boldsymbol{0}}
\newcommand{\bfalpha}{\boldsymbol{\alpha}}
\newcommand{\bfbeta}{\boldsymbol{\beta}}
\newcommand{\bfgamma}{\boldsymbol{\gamma}}

\begin{abstract}
We describe the class of $n$-variable polynomial functions that satisfy Acz\'el's bisymmetry property over an arbitrary integral domain of characteristic zero with identity.
\end{abstract}

\keywords{Acz\'el's bisymmetry, mediality, polynomial function, integral domain.}

\subjclass[2010]{Primary 39B72; Secondary 13B25, 26B35}

\maketitle

%---------------------------------------------------------------------------------------------- Section 1
\section{Introduction}

Let $\mathcal{R}$ be an integral domain of characteristic zero (hence $\mathcal{R}$ is infinite) with identity and let $n\geqslant 1$ be an integer. In this paper we provide a complete
description of all the $n$-variable polynomial functions over $\mathcal{R}$ that satisfy the (Acz\'el) bisymmetry property. Recall that a function $f\colon\mathcal{R}^n\to\mathcal{R}$ is \emph{bisymmetric} if the $n^2$-variable mapping
$$
\big(x_{11},\ldots,x_{1n};\ldots ;x_{n1},\ldots,x_{nn}\big)\mapsto f\big(f(x_{11},\ldots,x_{1n}),\ldots,f(x_{n1},\ldots,x_{nn})\big)
$$
does not change if we replace every $x_{ij}$ by $x_{ji}$.

The bisymmetry property for $n$-variable real functions goes back to Acz\'el \cite{Acz46,Acz48}. It has been investigated since then in the theory of functional equations by several authors, especially in characterizations of mean functions and some of their extensions (see, e.g., \cite{AczDho89,CouLeh11,FodMar97,GraMarMesPap09}). This property is also studied in algebra where it is called \emph{mediality}. For instance, an algebra $(A,f)$ where $f$ is a bisymmetric binary operation is called a \emph{medial groupoid} (see, e.g., \cite{JezKep83,JezKep83b,Sou71}).

We now state our main result, which provides a description of the possible bisymmetric polynomial functions from $\mathcal{R}^n$ to $\mathcal{R}$. Let $\mathrm{Frac}(\mathcal{R})$ denote the fraction field of $\mathcal{R}$ and let $\N$ be the set of nonnegative integers. For any $n$-tuple $\mathbf{x} = (x_1,\ldots,x_n)$, we set $|\bfx|=\sum_{i=1}^nx_i$.

\begin{main}
A polynomial function $P\colon \mathcal{R}^n\to \mathcal{R}$ is bisymmetric if and only if it is
\begin{itemize}
\item[$(i)$] univariate, or

\item[$(ii)$] of degree $\leqslant 1$, that is, of the form
$$
P(\bfx)=a_0+\sum_{i=1}^n a_i\, x_i\, ,
$$
where $a_i\in\mathcal{R}$ for $i=0,\ldots,n$, or

\item[$(iii)$] of the form
$$
P(\bfx)=a\prod_{i=1}^n(x_i+b)^{\alpha_i}-b\, ,
$$
where $a\in \mathcal{R}$, $b\in \mathrm{Frac}(\mathcal{R})$, and $\bfalpha\in \N^n$ satisfy $a b^k\in \mathcal{R}$ for $k=1,\ldots,|\bfalpha|-1$ and $a b^{|\bfalpha|}-b\in \mathcal{R}$.
\end{itemize}
%\marginpar{Check the conditions on $a$ and $b$}
\end{main}

The following example, borrowed from \cite{MarMat11}, gives a polynomial function of class $(iii)$ for which $b\notin\mathcal{R}$.

\begin{example}
The third-degree polynomial function $P\colon\Z^3\to\Z$ defined on the ring $\Z$ of integers by
\[
P(x_1,x_2,x_3)=9\, x_1x_2x_3+3\, (x_1x_2+x_2x_3+x_3x_1)+x_1+x_2+x_3
\]
is bisymmetric since it is the restriction to $\Z$ of the bisymmetric polynomial function $Q\colon\Q^3\to\Q$ defined on the field $\Q$ of
rationals by
\[
Q(x_1,x_2,x_3)=9\prod_{i=1}^3\Big(x_i+\frac{1}{3}\Big)-\frac{1}{3}\, .
\]
\end{example}

Since polynomial functions usually constitute the most basic functions, the problem of describing the class of bisymmetric polynomial functions is quite natural. On this subject it is noteworthy that a description of the class of bisymmetric lattice polynomial functions over bounded chains and more generally over distributive lattices has been recently obtained \cite{CouLeh11,BehCouKeaLehSze11} (there bisymmetry is called self-commutation), where a lattice polynomial function is a function representable by combinations of variables and constants using the fundamental lattice operations $\wedge$ and $\vee$.

From the Main Theorem we can derive the following test to determine whether a non-univariate polynomial function $P\colon \mathcal{R}^n\to \mathcal{R}$ of degree $p\geqslant 2$ is bisymmetric. For $k\in\{p-1,p\}$, let $P_k$ be the homogenous polynomial function obtained from $P$ by considering the terms of degree $k$ only. Then $P$ is bisymmetric if and only if $P_p$ is a monomial and $P_p(\bfx)=P(\bfx-b\mathbf{1})+b$, where $\mathbf{1}=(1,\ldots,1)$ and $b=P_{p-1}(\mathbf{1})/(p\, P_p(\mathbf{1}))$.

Note that the Main Theorem does not hold for an infinite integral domain $\mathcal{R}$ of characteristic $r>0$. As a counterexample, the bivariate polynomial function $P(x_1,x_2)=x_1^r+x_2^r$ is bisymmetric.

In the next section we provide the proof of the Main Theorem, assuming first that $\mathcal{R}$ is a field and then an integral domain.

%---------------------------------------------------------------------------------------------- Section 2
\section{Technicalities and proof of the Main Theorem}

We observe that the definition of $\mathcal{R}$ enables us to identify the ring $\mathcal{R}[x_1,\ldots,x_n]$ of polynomials of $n$ indeterminates over $\mathcal{R}$ with the ring of polynomial functions of $n$ variables from $\mathcal{R}^n$ to $\mathcal{R}$.

It is a straightforward exercise to show that the $n$-variable polynomial functions given in the Main Theorem are bisymmetric. We now show that no other $n$-variable polynomial function is bisymmetric. We first consider the special case when $\mathcal{R}$ is a field. We then prove the Main Theorem in the general case (i.e., when $\mathcal{R}$ is an integral domain of characteristic zero with identity).

Unless stated otherwise, we henceforth assume that $\mathcal{R}$ is a field of characteristic zero. Let $p\in\N$ and let $P\colon \mathcal{R}^n\to\mathcal{R}$ be a polynomial function of degree $p$. Thus $P$ can be written in the form
$$
P(\bfx)=\sum_{|\bfalpha|\leqslant p}c_{\bfalpha}\,\bfx^{\bfalpha},\quad \mbox{with}~ \bfx^{\bfalpha}=\prod_{i=1}^nx_i^{\alpha_i}\, ,
$$
where the sum is taken over all $\bfalpha\in\N^n$ such that $|\bfalpha|\leqslant p$.

The following lemma, which makes use of formal derivatives of polynomial functions, will be useful as we continue.

\begin{lemma}
For every polynomial function $B\colon\mathcal{R}^n\to\mathcal{R}$ of degree $p$ and every $\bfx_0,\bfy_0\in\mathcal{R}^n$, we have
\begin{equation}\label{eq:eq4}
B(\bfx_0+\bfy_0)=\sum_{|\bfalpha|\leqslant p}\frac{\bfy_0^{\bfalpha}}{\bfalpha!}\,(\partial^{\bfalpha}_\bfx B)(\bfx_0)\, ,
\end{equation}
where $\partial_{\bfx}^{\bfalpha}=\partial_{x_1}^{\alpha_1}\cdots\,\partial_{x_n}^{\alpha_n}$ and $\bfalpha!=\alpha_1!\cdots\,\alpha_n!$.
\end{lemma}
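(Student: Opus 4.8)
The plan is to reduce \eqref{eq:eq4} to the case of a single monomial and then invoke the one-variable binomial theorem. Both sides of \eqref{eq:eq4} are linear in $B$: the left-hand side $B(\bfx_0+\bfy_0)$ is clearly linear in the coefficients of $B$, and the right-hand side is linear because each formal derivative $\partial_\bfx^\bfalpha$ is a linear operator and evaluation at $\bfx_0$ is linear. Hence it suffices to verify \eqref{eq:eq4} when $B$ ranges over the monomial basis, that is, when $B(\bfx)=\bfx^\bfgamma$ for some $\bfgamma\in\N^n$ with $|\bfgamma|\leqslant p$.

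First I would record the elementary identity for the formal derivative of a single power: for $\alpha_i\leqslant\gamma_i$ one has $\partial_{x_i}^{\alpha_i}x_i^{\gamma_i}=\frac{\gamma_i!}{(\gamma_i-\alpha_i)!}\,x_i^{\gamma_i-\alpha_i}$, while $\partial_{x_i}^{\alpha_i}x_i^{\gamma_i}=0$ when $\alpha_i>\gamma_i$. Dividing by $\alpha_i!$ and multiplying over $i$ yields
$$
\frac{1}{\bfalpha!}\,(\partial_\bfx^\bfalpha\,\bfx^\bfgamma)(\bfx_0)=\binom{\bfgamma}{\bfalpha}\,\bfx_0^{\bfgamma-\bfalpha},
$$
where $\binom{\bfgamma}{\bfalpha}=\prod_{i=1}^n\binom{\gamma_i}{\alpha_i}$, with the convention that the expression vanishes unless $\bfalpha\leqslant\bfgamma$ componentwise.

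Then I would expand the left-hand side by applying the one-variable binomial theorem in each coordinate and collecting the product:
$$
(\bfx_0+\bfy_0)^\bfgamma=\prod_{i=1}^n\Big(\sum_{\alpha_i=0}^{\gamma_i}\binom{\gamma_i}{\alpha_i}\,x_{0,i}^{\gamma_i-\alpha_i}\,y_{0,i}^{\alpha_i}\Big)=\sum_{\bfalpha\leqslant\bfgamma}\binom{\bfgamma}{\bfalpha}\,\bfx_0^{\bfgamma-\bfalpha}\,\bfy_0^{\bfalpha}.
$$
Substituting the previous identity turns each summand on the right into $\frac{\bfy_0^\bfalpha}{\bfalpha!}(\partial_\bfx^\bfalpha\,\bfx^\bfgamma)(\bfx_0)$, and since this term is zero whenever $\bfalpha\not\leqslant\bfgamma$, the constraint $\bfalpha\leqslant\bfgamma$ may be relaxed to $|\bfalpha|\leqslant p$ (which includes every $\bfalpha\leqslant\bfgamma$ because $|\bfgamma|\leqslant p$). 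This is exactly \eqref{eq:eq4} for $B=\bfx^\bfgamma$, and linearity then finishes the proof.

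I do not anticipate a genuine obstacle here, since this is the exact (terminating) Taylor formula for polynomials; the only point demanding care is the bookkeeping with multi-indices, in particular checking that the per-coordinate factorials cancel correctly against the binomial coefficients. It is worth noting where the hypotheses enter: because $B$ has degree $p$, no derivative of order exceeding $p$ survives, so the sum legitimately terminates at $|\bfalpha|\leqslant p$; and because $\mathcal{R}$ has characteristic zero, the factorials $\bfalpha!$ are invertible in $\mathrm{Frac}(\mathcal{R})$, so the division in \eqref{eq:eq4} makes sense. Moreover, using the identification of polynomials with polynomial functions noted above, it is enough to establish \eqref{eq:eq4} as a polynomial identity in the entries of $\bfx_0$ and $\bfy_0$.
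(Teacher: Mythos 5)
Your proof is correct and follows the same route as the paper: reduce to monomial functions by linearity (additivity in $B$) and then invoke the multi-binomial theorem, which you simply work out coordinate-by-coordinate where the paper leaves the expansion to the reader. No issues.
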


\begin{proof}
It is enough to prove the result for monomial functions since both sides of (\ref{eq:eq4}) are additive on the function $B$. We then observe that for a monomial function $B(\bfx)=c\,\bfx^{\bfbeta}$ the identity (\ref{eq:eq4}) reduces to the multi-binomial theorem.
\end{proof}

As we will see, it is useful to decompose $P$ into its homogeneous components, that is, $P=\sum_{k=0}^pP_k$, where
$$
P_k(\bfx)=\sum_{|\bfalpha|=k}c_{\bfalpha}\,\bfx^{\bfalpha}
$$
is the unique homogeneous component of degree $k$ of $P$. In this paper the homogeneous component of degree $k$ of a polynomial function $R$ will often be denoted by $[R]_k$.

Since $P_p\neq 0$, the polynomial function $Q=P-P_p$, that is
$$
Q(\bfx)=\sum_{|\bfalpha|< p}c_{\bfalpha}\,\bfx^{\bfalpha},
$$
is of degree $q<p$ and its homogeneous component $[Q]_q$ of degree $q$ is $P_q$.

We now assume that $P$ is a bisymmetric polynomial function. This means that the polynomial identity
\begin{equation}\label{eq:bisym}
P\big(P(\mathbf{r}_1),\ldots,P(\mathbf{r}_n)\big)-P\big(P(\mathbf{c}_1),\ldots,P(\mathbf{c}_n)\big)=0
\end{equation}
holds for every $n\times n$ matrix
\begin{equation}\label{eq:matrix}
X=
\begin{pmatrix}
x_{11} & \cdots & x_{1n}\\
\vdots & \ddots & \vdots\\
x_{n1} & \cdots & x_{nn}
\end{pmatrix}
\in \mathcal{R}^n_n\, ,
\end{equation}
where $\mathbf{r}_i=(x_{i1},\ldots,x_{in})$ and $\mathbf{c}_j= (x_{1j},\ldots,x_{nj})$ denote its $i$th row and $j$th column, respectively. Since all the polynomial functions of degree $\leqslant 1$ are bisymmetric, we may (and henceforth do) assume that $p\geqslant 2$.

From the decomposition $P=P_p+Q$ it follows that
$$
P\big(P(\mathbf{r}_1),\ldots,P(\mathbf{r}_n)\big)=P_p\big(P(\mathbf{r}_1),\ldots,P(\mathbf{r}_n)\big)+Q\big(P(\mathbf{r}_1),\ldots,P(\mathbf{r}_n)\big),
$$
where $Q(P(\mathbf{r}_1),\ldots,P(\mathbf{r}_n))$ is of degree $p\, q$.

To obtain necessary conditions for $P$ to be bisymmetric, we will equate the homogeneous components of the same degree $>p\, q$ of both sides of (\ref{eq:bisym}). By the previous observation this amounts to considering the equation
\begin{equation}\label{eq:bisympp}
\big[P_p\big(P(\mathbf{r}_1),\ldots,P(\mathbf{r}_n)\big)-P_p\big(P(\mathbf{c}_1),\ldots,P(\mathbf{c}_n)\big)\big]_d=0\, ,\quad \mbox{for $~p\, q< d\leqslant p^2$}.
\end{equation}
By applying (\ref{eq:eq4}) to the polynomial function $P_p$ and the $n$-tuples
$$
\bfx_0=(P_p(\mathbf{r}_1),\ldots,P_p(\mathbf{r}_n))\quad\mbox{and}\quad\bfy_0=(Q(\mathbf{r}_1),\ldots,Q(\mathbf{r}_n)),
$$
we obtain
\begin{equation}\label{eq:eq6}
P_p(P(\mathbf{r}_1),\ldots,P(\mathbf{r}_n)) ~=~ \sum_{|\bfalpha|\leqslant p}\frac{\bfy_0^{\bfalpha}}{\bfalpha!}\,\partial^{\bfalpha}_\bfx P_p(\bfx_0)
\end{equation}
and similarly for $P_p(P(\mathbf{c}_1),\ldots,P(\mathbf{c}_n))$. We then observe that in the sum of (\ref{eq:eq6}) the term corresponding to a fixed $\bfalpha$ is either zero or of degree $$q\,|\bfalpha|+(p-|\bfalpha|)\,p=p^2-(p-q)\,|\bfalpha|$$ and its homogeneous component of highest degree is obtained by ignoring the components of degrees $<q$ in $Q$, that is, by replacing $\bfy_0$ by $(P_q(\mathbf{r}_1),\ldots,P_q(\mathbf{r}_n))$.

Using (\ref{eq:bisympp}) with $d=p^2$, which leads us to consider the terms in (\ref{eq:eq6}) for which $|\bfalpha|=0$, we obtain
\begin{equation}\label{eq:Pp}
P_p(P_p(\mathbf{r}_1),\ldots,P_p(\mathbf{r}_n))-P_p(P_p(\mathbf{c}_1),\ldots,P_p(\mathbf{c}_n))=0.
\end{equation}
Thus, we have proved the following claim.

\begin{claim}\label{claim:111}
The polynomial function $P_p$ is bisymmetric.
\end{claim}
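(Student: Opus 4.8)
The plan is to extract from the bisymmetry identity \eqref{eq:bisym} its homogeneous component of top degree $p^2$ and to recognize that this component is exactly the bisymmetry identity for $P_p$. The heavy lifting has already been arranged by \eqref{eq:bisympp} and \eqref{eq:eq6}, so the argument is really a matter of isolating the right term.

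First I would record the degree count. For each row $\mathbf{r}_i$ the entry $P(\mathbf{r}_i)$ is a polynomial of degree $p$ in the variables $x_{i1},\ldots,x_{in}$, so the composite $P(P(\mathbf{r}_1),\ldots,P(\mathbf{r}_n))$ has degree $p\cdot p=p^2$; the same holds for the column composite $P(P(\mathbf{c}_1),\ldots,P(\mathbf{c}_n))$. Since \eqref{eq:bisym} is an identity between polynomials in the $x_{ij}$, their homogeneous components of each degree must agree, and in particular their degree-$p^2$ components coincide. This is precisely \eqref{eq:bisympp} taken with $d=p^2$.

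Next I would identify that top component explicitly by means of \eqref{eq:eq6}. In the sum there, the term indexed by $\bfalpha$ has degree $p^2-(p-q)\,|\bfalpha|$; since $q<p$ we have $p-q\geqslant 1$, so every term with $|\bfalpha|\geqslant 1$ lands in a strictly lower degree. Hence the only contribution to degree $p^2$ comes from $\bfalpha=\bfo$, namely $\partial^{\bfo}_\bfx P_p(\bfx_0)=P_p(\bfx_0)=P_p(P_p(\mathbf{r}_1),\ldots,P_p(\mathbf{r}_n))$, and likewise for the columns. Equating these top components yields exactly \eqref{eq:Pp}, which is the statement that $P_p$ is bisymmetric. (If $Q=0$ then $P=P_p$ and the claim is immediate.)

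The only delicate point is this degree bookkeeping: one must be sure that no lower-order part carried by $Q$ can feed back up into degree $p^2$, and that the composition of top-degree parts is genuinely the top-degree part of the composition. Both facts are already controlled by the degree formula $p^2-(p-q)\,|\bfalpha|$ together with $p-q\geqslant 1$, so once \eqref{eq:eq6} is in hand the claim follows with no further computation.
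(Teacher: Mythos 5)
Your proposal is correct and follows essentially the same route as the paper: decompose $P=P_p+Q$, note the outer $Q$-composite lives in degree $pq<p^2$ so that the degree-$p^2$ component of \eqref{eq:bisym} is \eqref{eq:bisympp} with $d=p^2$, and then use the expansion \eqref{eq:eq6} with the degree count $p^2-(p-q)\,|\bfalpha|$ to see that only the $\bfalpha=\bfo$ term survives, which yields \eqref{eq:Pp}. The only cosmetic remark is that asserting the composite \emph{has} degree exactly $p^2$ is slightly more than you need (and would require noting that $P_p(P_p(\mathbf{r}_1),\ldots,P_p(\mathbf{r}_n))\neq 0$); equating degree-$p^2$ homogeneous components works regardless.
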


We now show that $P_p$ is a monomial function.

\begin{proposition}\label{prop:homog}
Let $H\colon\mathcal{R}^n\to\mathcal{R}$ be a bisymmetric polynomial function of degree $p\geqslant 2$. If $H$ is homogeneous, then it is a monomial function.
\end{proposition}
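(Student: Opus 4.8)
The plan is to use bisymmetry only through one carefully chosen rank-one specialization of the matrix $X$, which collapses the full $n^2$-variable identity into a single functional equation for $H$, and then to resolve that equation by a combinatorial argument on the support (Newton polytope) of $H$.

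First I would substitute $x_{ij}=s_it_j$ for new indeterminates $s_1,\ldots,s_n,t_1,\ldots,t_n$. Under this substitution the $i$th row becomes $\mathbf{r}_i=s_i\,\mathbf{t}$ and the $j$th column becomes $\mathbf{c}_j=t_j\,\mathbf{s}$, where $\mathbf{s}=(s_1,\ldots,s_n)$ and $\mathbf{t}=(t_1,\ldots,t_n)$. Homogeneity of degree $p$ then gives $H(\mathbf{r}_i)=s_i^p\,H(\mathbf{t})$ and $H(\mathbf{c}_j)=t_j^p\,H(\mathbf{s})$, and one further application of homogeneity turns the bisymmetry identity for $H$ into
$$
H(\mathbf{t})^p\,H(s_1^p,\ldots,s_n^p)=H(\mathbf{s})^p\,H(t_1^p,\ldots,t_n^p).
$$
Since $\mathcal{R}$ is infinite this is a genuine polynomial identity in the $s_i,t_j$, and separating the two disjoint sets of variables shows that $H(s_1^p,\ldots,s_n^p)/H(\mathbf{s})^p$ equals a constant $\lambda$. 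Comparing leading coefficients gives $\lambda\neq0$, so that, writing the tuple of variables again as $\bfx$, we are reduced to the single identity $H(x_1^p,\ldots,x_n^p)=\lambda\,H(\bfx)^p$ with $\lambda\neq0$.

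Next I would argue by contradiction. Write $H(\bfx)=\sum_{\bfalpha\in S}c_{\bfalpha}\,\bfx^{\bfalpha}$, where $S=\{\bfalpha\in\N^n:|\bfalpha|=p,\ c_{\bfalpha}\neq0\}$ is the support, and suppose $|S|\geqslant2$. Choose a linear functional $\ell$ on $\R^n$ taking pairwise distinct values on the finite set $S$, and let $\bfalpha^{(1)}$ and $\bfalpha^{(2)}$ be the elements of $S$ on which $\ell$ attains its largest and second largest values. I would then read off the coefficient of the monomial $\bfx^{\bfgamma}$ with $\bfgamma=(p-1)\bfalpha^{(1)}+\bfalpha^{(2)}$ on both sides of $H(x_1^p,\ldots,x_n^p)=\lambda\,H(\bfx)^p$. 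On the left-hand side only exponents of the form $p\bfalpha$ with $\bfalpha\in S$ occur; but $\ell(\bfgamma)/p$ lies strictly between $\ell(\bfalpha^{(2)})$ and $\ell(\bfalpha^{(1)})$, so no $\bfalpha\in S$ can satisfy $p\bfalpha=\bfgamma$, and the left-hand coefficient vanishes. On the right-hand side, the maximality of $\ell(\bfalpha^{(1)})$ together with the fact that $\ell(\bfalpha^{(2)})$ is the next value forces the only decomposition $\bfgamma=\bfalpha_1+\cdots+\bfalpha_p$ with all $\bfalpha_k\in S$ to consist of $p-1$ copies of $\bfalpha^{(1)}$ and one copy of $\bfalpha^{(2)}$, so that coefficient equals $\lambda\,p\,c_{\bfalpha^{(1)}}^{\,p-1}\,c_{\bfalpha^{(2)}}$. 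Since $\lambda\neq0$, since $p\neq0$ in characteristic zero, and since $c_{\bfalpha^{(1)}}\neq0$, this forces $c_{\bfalpha^{(2)}}=0$, a contradiction. Hence $|S|=1$ and $H$ is a monomial function.

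The substantive content is concentrated in two places. The first is the idea of collapsing the full $n^2$-variable bisymmetry identity to the compact equation $H(x_1^p,\ldots,x_n^p)=\lambda\,H(\bfx)^p$ by means of a rank-one matrix; everything afterwards is elementary. The second, and the step I expect to demand the most care, is the bookkeeping showing that the exponent $(p-1)\bfalpha^{(1)}+\bfalpha^{(2)}$ is simultaneously \emph{not} of the form $p\bfalpha$ (so it cannot appear on the left) and \emph{uniquely} decomposable on the right; the generic functional $\ell$ is exactly what makes both clean, by excluding any intermediate lattice point of $S$ and any competing decomposition. Characteristic zero enters decisively to guarantee both $p\neq0$ and $\lambda\neq0$, which is also where the positive-characteristic counterexample $x_1^r+x_2^r$ escapes the argument.
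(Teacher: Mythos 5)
Your proof is correct and takes essentially the same route as the paper: the identical rank-one substitution $x_{ij}=x_iy_j$ yields $H(\bfx)^p\,H(\bfy^p)=H(\bfy)^p\,H(\bfx^p)$, which the paper turns into $H(\bfx)^p=a^{p-1}H(\bfx^p)$ by extracting the coefficient of $\bfy^{p\bfalpha}$ (so your constant $\lambda$ is just $a^{p-1}$, obtained there without the separation-of-variables step). The final contradiction is also the same, namely that the exponent $(p-1)\bfalpha^{(1)}+\bfalpha^{(2)}$ occurs on exactly one side of that identity, with your generic linear functional playing the role of the paper's lexicographic order.
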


\begin{proof}
Consider a bisymmetric homogeneous polynomial $H\colon\mathcal{R}^n\to\mathcal{R}$ of degree $p\geqslant 2$. There is nothing to prove if $H$ depends on one variable only. Otherwise, assume for the sake of a contradiction that $H$ is the sum of at least two monomials of degree $p$, that is,
\[
H(\bfx)=a\, \mathbf{x}^{\bfalpha}+b\, \mathbf{x}^{\bfbeta}+\sum_{|\bfgamma|=p}c_{\bfgamma}\,\bfx^{\bfgamma},
\]
where $a\,b\neq 0$ and $|\bfalpha|=|\bfbeta|=p$.
Using the lexicographic order $\preccurlyeq$ over $\N^n$, we can assume that $\bfalpha\succ\bfbeta\succ\bfgamma$. Applying the bisymmetry property of $H$ to the $n\times n$ matrix whose $(i,j)$-entry is $x_iy_j$, we obtain
\[
H(\bfx)^p\, H(\bfy^p)=H(\bfy)^p\, H(\bfx^p),
\]
where $\bfx^p=(x_1^p,\ldots,x_n^p)$. Regarding this equality as a polynomial identity in $\bfy$ and then equating the coefficients of its monomial terms with exponent $p\,\bfalpha$, we obtain
\begin{equation}\label{eq:sdf79}
H(\bfx)^p= a^{p-1}\, H(\bfx^p).
\end{equation}
Since $\mathcal{R}$ is of characteristic zero, there is a nonzero monomial term with exponent $(p-1)\,\bfalpha+\bfbeta$ in the left-hand side of (\ref{eq:sdf79}) while there is no such term in the right-hand side since $p\,\bfalpha\succ (p-1)\,\bfalpha+\bfbeta\succ p\,\bfbeta$ (since $p\geqslant 2$). Hence a contradiction.
\end{proof}

%\begin{remark}
%If we assume that $H(x,\ldots,x)$ is not the zero polynomial in Proposition~\ref{prop:homog}, then in its proof we only need to apply the bisymmetry condition to a matrix whose rows are all equal to $(x_1,\ldots,x_n)$. ;;; Qu'est-ce que cette remarque apporte ? ;;; It establishes a link with the polynomials that commute with their diagonal section.
%\end{remark}

The next claim follows immediately from Proposition~\ref{prop:homog}.

\begin{claim}\label{claim:homog}
$P_p$ is a monomial function.
\end{claim}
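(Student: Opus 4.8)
The plan is to deduce this claim directly from the two results already in hand, since the statement is essentially a matter of assembling them. First I would observe that $P_p$ is, by its very definition, the homogeneous component of degree $p$ of $P$; hence it is a homogeneous polynomial function whose degree is exactly $p\geqslant 2$. Next, Claim~\ref{claim:111} asserts that $P_p$ is bisymmetric. These two observations together show that $P_p$ satisfies every hypothesis of Proposition~\ref{prop:homog}, namely that it is a bisymmetric homogeneous polynomial function of degree at least $2$.

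With the hypotheses verified, I would simply invoke Proposition~\ref{prop:homog} to conclude that $P_p$ is a monomial function, which is precisely the assertion of the claim. I do not expect any genuine obstacle here, since all the substantive work has already been done elsewhere: on the one hand in Claim~\ref{claim:111}, where the top homogeneous component of the bisymmetry identity was isolated by equating the terms of degree $p^2$ in~(\ref{eq:bisympp})--(\ref{eq:Pp}); and on the other hand in Proposition~\ref{prop:homog}, whose proof supplies the lexicographic-ordering argument that rules out the presence of two or more distinct monomials of maximal degree. The present claim is therefore purely a bookkeeping step, and its proof reduces to a single application of the proposition to $H=P_p$.
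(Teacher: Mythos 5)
Your proof is correct and matches the paper exactly: the paper itself states that this claim ``follows immediately from Proposition~\ref{prop:homog}'', applied to $H=P_p$, which is homogeneous of degree $p\geqslant 2$ and bisymmetric by Claim~\ref{claim:111}. Your assembly of these facts is precisely the intended argument.
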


By Claim~\ref{claim:homog} we can (and henceforth do) assume that there exist $c\in\mathcal{R}\setminus\{0\}$ and $\bfgamma\in\N^n$, with $|\bfgamma|=p$, such that
\begin{equation}\label{eq:8sadsf}
P_p(\bfx)=c\,\bfx^{\bfgamma}.
\end{equation}

A polynomial function $F\colon\mathcal{R}^n\to\mathcal{R}$ is said to \emph{depend on} its $i$th variable $x_i$ (or $x_i$ is \emph{essential} in $F$) if $\partial_{x_i}F\neq 0$. The following claim shows that $P_p$ determines the essential variables of $P$.

\begin{claim}\label{claim:8243}
If $P_p$ does not depend on the variable $x_j$, then $P$ does not depend on $x_j$.
\end{claim}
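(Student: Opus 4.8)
The plan is to prove the contrapositive: assuming $P$ depends on $x_j$, I will force $P_p$ to depend on $x_j$, i.e. $\gamma_j\geq 1$. So suppose $\gamma_j=0$ and, for contradiction, that $d:=\deg_{x_j}P\geq 1$. Writing $P$ as a polynomial in $x_j$ alone, $P=\sum_{l=0}^d A_l\,x_j^l$ with coefficients $A_l$ in the remaining $n-1$ variables and $A_d\neq 0$, note that $c\,\bfx^{\bfgamma}$ has $x_j$-degree $0$; were $d=p$, the coefficient $A_p$ would be a nonzero constant and $A_px_j^p$ a degree-$p$ monomial distinct from $c\,\bfx^{\bfgamma}$, contradicting Claim~\ref{claim:homog}. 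Hence $1\leq d\leq p-1$, a bound I use below.

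First I would read off the top of the bisymmetry identity (\ref{eq:bisym}) in the variables $x_{1j},\dots,x_{nj}$ of the $j$th column. Grading that polynomial identity by total degree in these $n$ variables, the highest piece has degree $p\,d$: on the left it comes only from $P_p(P(\mathbf{r}_1),\dots,P(\mathbf{r}_n))=c\prod_i P(\mathbf{r}_i)^{\gamma_i}$ (the $Q$-part contributes degree $\leq q\,d<p\,d$), and on the right only from the $x_j^d$-coefficient of the outer $P$ acting on $P(\mathbf{c}_j)$. Comparing these pieces and cancelling the common factor $\prod_i x_{ij}^{d\gamma_i}$ gives
\[
\prod_{i=1}^n A_d(\mathbf{r}_i)^{\gamma_i}=c^{\,d-1}\,A_d\big(P(\mathbf{c}_1),\dots,P(\mathbf{c}_{j-1}),P(\mathbf{c}_{j+1}),\dots,P(\mathbf{c}_n)\big),
\]
where $A_d(\mathbf{r}_i)$ means $A_d$ evaluated at row $i$ with its $j$th entry deleted. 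Since $\gamma_j=0$, the left-hand side drops the factor $i=j$ and so involves no entry of row $j$; hence neither does the right-hand side. But for $k\neq j$ the entry $x_{jk}$ enters the right-hand side only through $P(\mathbf{c}_k)$, and its derivative there is $c^{d-1}(\partial_k A_d)(\cdots)\,(\partial_{x_j}P)(\mathbf{c}_k)$. As $d\geq 1$ the factor $(\partial_{x_j}P)(\mathbf{c}_k)$ is nonzero, and since the columns involve disjoint indeterminates the values $P(\mathbf{c}_k)$ are algebraically independent; thus $(\partial_k A_d)(\cdots)=0$ forces $\partial_k A_d=0$. Running over all $k\neq j$ shows $A_d$ is a nonzero constant $\lambda$, i.e. the leading $x_j$-term of $P$ is the pure power $\lambda\,x_j^d$.

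This pure-power case is the main obstacle, because the leading comparison above is satisfied by it and cannot detect it (for instance $P=x_1^2+x_2$ passes every top-order test yet is not bisymmetric). Here I would instead specialize (\ref{eq:bisym}) by setting every row equal to $\bfx$: writing $D(s)=P(s,\dots,s)$, of degree $p$ with leading coefficient $c$, this yields $D(P(\bfx))=P(D(x_1),\dots,D(x_n))$. I then compare the bidegree (degree in $x_j$, degree in the other variables). On the right, the only monomial of $P$ of $x_j$-degree $d$ is $\lambda x_j^d$, so every monomial of $P(D(x_1),\dots,D(x_n))$ of $x_j$-degree exceeding $p(d-1)$ comes from $\lambda D(x_j)^d$ and is a pure power of $x_j$. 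On the left, let $l_1\leq d-1$ be the largest $x_j$-degree occurring in a monomial of $P$ that also involves another variable (one exists, e.g. $c\,\bfx^{\bfgamma}$); then $c\,P(\bfx)^p$ produces the monomial built from $p-1$ factors $\lambda x_j^d$ and one such mixed factor, with coefficient $c\,p\,\lambda^{p-1}(\cdots)$, nonzero since $\mathcal{R}$ has characteristic zero, and of $x_j$-degree $(p-1)d+l_1$. As $(p-1)d+l_1-p(d-1)=p-d+l_1\geq 1$, this monomial has $x_j$-degree exceeding $p(d-1)$ yet genuinely involves another variable, contradicting the structure of the right-hand side; hence $d=0$.

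I expect the bookkeeping in this final step to be the delicate point, and I would handle it as follows: a product of $P$-monomials using two or more mixed factors has $x_j$-degree smaller by at least $d-l_1\geq 1$, so only the single combination above reaches $x_j$-degree $(p-1)d+l_1$ among mixed monomials; and for $k<p$ the term $P^k$ cannot reach that $x_j$-degree among mixed monomials at all, so there is no cancellation of the offending coefficient.
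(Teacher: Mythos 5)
Your proof is correct, and it takes a genuinely different route from the paper's. The paper's own proof is a single degree count: differentiating the bisymmetry identity (\ref{eq:bisym}) with respect to one entry $x_{ij}$, where $i\neq j$ is chosen with $\partial_{x_i}P_p\neq 0$, gives
\[
(\partial_{x_i}P)(P(\mathbf{r}_1),\ldots,P(\mathbf{r}_n))\,(\partial_{x_j}P)(\mathbf{r}_i)=(\partial_{x_j}P)(P(\mathbf{c}_1),\ldots,P(\mathbf{c}_n))\,(\partial_{x_i}P)(\mathbf{c}_j),
\]
and if $\partial_{x_j}P\neq 0$ has leading form of degree $r$, then $\partial_{x_j}P_p=0$ forces $r<p-1$, so the left-hand side has degree exactly $p(p-1)+r$ while the right-hand side has degree at most $rp+p-1<p(p-1)+r$ --- contradiction. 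Notably, this argument never uses Claim~\ref{claim:homog}. Your argument instead runs in two stages: first, grading by total degree in the $j$th-column entries and comparing leading forms (then differentiating in the row-$j$ entries and invoking algebraic independence of nonconstant polynomials in disjoint sets of indeterminates --- a correct fact, and the same disjoint-variables principle the paper uses tacitly), you conclude that the top $x_j$-coefficient $A_d$ is a nonzero constant; second, since as you rightly observe with $x_1^2+x_2$ no leading-order comparison can detect the pure-power case, you pass to the diagonal specialization $D(P(\bfx))=P(D(x_1),\ldots,D(x_n))$ and run a mixed-versus-pure monomial count. I checked the bookkeeping there: $d\leqslant p-1$ (your Step 1, which does need Claim~\ref{claim:homog}), $0\leqslant l_1\leqslant d-1$, the key inequality $(p-1)d+l_1>p(d-1)$, and the non-cancellation claims (products with two or more mixed factors, or coming from $P^k$ with $k<p$, fall strictly below degree $(p-1)d+l_1$, and pure monomials cannot cancel mixed ones) all hold. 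The trade-off: your proof leans essentially on the monomiality of $P_p$, which is legitimate at this point in the paper, and is considerably longer; the paper's differentiation trick is shorter and more general. What yours buys is an explicit identification of the pure-power obstruction --- the reason naive leading-term comparisons are insufficient --- and the diagonal identity, which is a reusable tool in this subject.
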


\begin{proof}
Suppose that $\partial_{x_j}P_p=0$ and fix $i\in\{1,\ldots,n\}$, $i\neq j$, such that $\partial_{x_i}P_p\not=0$. By taking the derivative of both sides of (\ref{eq:bisym}) with respect to $x_{ij}$, the $(i,j)$-entry of the matrix $X$ in (\ref{eq:matrix}), we obtain
\begin{equation}\label{eq:sd7f5}
(\partial_{x_i}P)(P(\mathbf{r}_1),\ldots,P(\mathbf{r}_n))(\partial_{x_j}P)(\mathbf{r}_i)=(\partial_{x_j}P)(P(\mathbf{c}_1),\ldots,P(\mathbf{c}_n))(\partial_{x_i}P)(\mathbf{c}_j).
\end{equation}
Suppose for the sake of a contradiction that $\partial_{x_j}P\neq 0$. Thus, neither side of (\ref{eq:sd7f5}) is the zero polynomial. Let $R_j$ be the homogeneous component of $\partial_{x_j}P$ of highest degree and denote its degree by $r$. Since $P_p$ does not depend on $x_j$, we must have $r< p-1$. Then the homogeneous component of highest degree of the left-hand side in (\ref{eq:sd7f5}) is given by
\[
(\partial_{x_i}P_p)(P_p(\mathbf{r}_1),\ldots,P_p(\mathbf{r}_n))\,R_j(\mathbf{r}_i)
\]
and is of degree $p(p-1)+r$. But the right-hand side in (\ref{eq:sd7f5}) is of degree at most $rp+p-1=(r+1)(p-1)+r<p(p-1)+r$, since $r< p-1$ and $p\geqslant 2$. Hence a contradiction. Therefore $\partial_{x_j}P=0$.
\end{proof}

We now give an explicit expression for $P_q=[P-P_p]_q$ in terms of $P_p$. We first present an equation that is satisfied by $P_q$.

\begin{claim}\label{claim:8s7f}
$P_q$ satisfies the equation
\begin{equation}\label{eqbeta1}
\sum_{i=1}^n P_q({\bf r}_i)(\partial_{x_i} P_p)(P_p({\bf r}_1),\ldots,P_p({\bf r}_n))= \sum_{i=1}^n P_q({\bf c}_i)(\partial_{x_i} P_p)(P_p({\bf c}_1),\ldots,P_p({\bf c}_n))
\end{equation}
for every matrix $X$ as defined in (\ref{eq:matrix}).
\end{claim}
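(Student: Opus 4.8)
The plan is to continue the analysis begun for Claim~\ref{claim:111}, now extracting from the bisymmetry identity the homogeneous component one degree-step below the top. Recall from (\ref{eq:bisympp}) that for every $d$ with $p\,q<d\leqslant p^2$ the degree-$d$ homogeneous components of $P_p(P(\mathbf{r}_1),\ldots,P(\mathbf{r}_n))$ and of $P_p(P(\mathbf{c}_1),\ldots,P(\mathbf{c}_n))$ coincide, since the remaining contribution $Q(P(\mathbf{r}_1),\ldots,P(\mathbf{r}_n))$ to (\ref{eq:bisym}) is only of degree $p\,q$. So it suffices to identify the correct component of the expansion (\ref{eq:eq6}).

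First I would single out the degree $d=p^2-(p-q)$. Since $p\geqslant 2$ and $q<p$, this $d$ satisfies $p\,q<d\leqslant p^2$: indeed $d-p\,q=(p-1)(p-q)>0$, and $d\leqslant p^2$ because $p-q>0$. Next I would revisit the degree count recorded just after (\ref{eq:eq6}): the term of (\ref{eq:eq6}) attached to a fixed $\bfalpha$ has top degree $p^2-(p-q)\,|\bfalpha|$, and this top degree strictly decreases as $|\bfalpha|$ grows, again because $p-q>0$. Consequently the only terms that can contribute to degree $d=p^2-(p-q)$ are those with $|\bfalpha|=1$: the term $\bfalpha=\bfo$ is homogeneous of degree $p^2>d$, while every $\bfalpha$ with $|\bfalpha|\geqslant 2$ has top degree at most $p^2-2(p-q)<d$.

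For $|\bfalpha|=1$, writing $\bfalpha=\bfe_i$ gives $\bfalpha!=1$, $\bfy_0^{\bfalpha}=Q(\mathbf{r}_i)$ and $\partial^{\bfalpha}_\bfx P_p=\partial_{x_i}P_p$. Since $(\partial_{x_i}P_p)(\bfx_0)$ is homogeneous of degree $p(p-1)$, the degree-$d$ part of the corresponding summand is obtained by replacing $Q(\mathbf{r}_i)$ by its top homogeneous component $P_q(\mathbf{r}_i)$, which is exactly the replacement of $\bfy_0$ by $(P_q(\mathbf{r}_1),\ldots,P_q(\mathbf{r}_n))$ noted after (\ref{eq:eq6}). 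Summing over $i$, the degree-$d$ component of $P_p(P(\mathbf{r}_1),\ldots,P(\mathbf{r}_n))$ is precisely the left-hand side of (\ref{eqbeta1}), and the analogous computation for the columns yields the right-hand side.

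Finally I would conclude by invoking (\ref{eq:bisympp}) with this value of $d$, which equates the two sides and gives (\ref{eqbeta1}). The only delicate point is the degree bookkeeping of the preceding paragraph: checking that $d$ lies strictly above $p\,q$, so that $Q(P(\mathbf{r}_1),\ldots,P(\mathbf{r}_n))$ cannot interfere, and that among all multi-indices only those with $|\bfalpha|=1$ reach degree $d$. Once the strict monotonicity of $p^2-(p-q)\,|\bfalpha|$ in $|\bfalpha|$ is observed, both facts are immediate, so I expect no substantial obstacle beyond this careful grading.
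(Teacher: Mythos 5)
Your proof is correct and follows essentially the same route as the paper: both extract the degree-$(p^2-(p-q))$ component of the bisymmetry identity via the expansion (\ref{eq:eq6}), check that this degree exceeds $p\,q$ because $(p-1)(p-q)>0$, observe that only the multi-indices with $|\bfalpha|=1$ contribute at that degree, and replace $Q$ by its top homogeneous component $P_q$. The only cosmetic difference is that you dispose of the $\bfalpha=\bfo$ term by noting it is homogeneous of degree $p^2>d$, whereas the paper cancels it between rows and columns by invoking (\ref{eq:Pp}); both observations are valid and the degree bookkeeping is identical.
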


\begin{proof}
By (\ref{eq:Pp}) and (\ref{eq:8sadsf}) we see that the left-hand side of (\ref{eq:bisympp}) for $d=p^2$ is zero. Therefore, the highest degree terms in the sum of (\ref{eq:eq6}) are of degree $p^2-(p-q)>p\, q$ (because $(p-1)(p-q)>0$) and correspond to those $\bfalpha\in\N^n$ for which $|\bfalpha|=1$. Collecting these terms and then considering only the homogeneous component of highest degree (that is, replacing $Q$ by $P_q$), we see that the identity (\ref{eq:bisympp}) for $d=p^2-(p-q)$ is precisely (\ref{eqbeta1}).
\end{proof}

\begin{claim}\label{claim:equiv}
We have
\begin{equation}\label{Pqform}
P_q(\bfx)=\frac{P_q(\boldsymbol{1})}{c\, p}\, P_p(\bfx)\,\sum\limits_{j=1}^n\frac{\gamma_j}{x_j^{p-q}}\, .
\end{equation}
Moreover, $P_q=0$ if there exists $j\in\{1,\ldots,n\}$ such that $0<\gamma_j < p-q$.
\end{claim}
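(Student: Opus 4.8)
The plan is to extract $P_q$ from equation~(\ref{eqbeta1}) by specializing $X$ to the rank-one matrix with entries $x_{ij}=x_iy_j$, the same device used in Proposition~\ref{prop:homog}. With this choice $\mathbf{r}_i=x_i\bfy$ and $\mathbf{c}_j=y_j\bfx$, so homogeneity collapses everything: $P_q(\mathbf{r}_i)=x_i^q\,P_q(\bfy)$, $P_p(\mathbf{r}_k)=x_k^p\,P_p(\bfy)$, and since $\partial_{x_i}P_p$ is homogeneous of degree $p-1$, the vector $\big(P_p(\mathbf{r}_1),\dots,P_p(\mathbf{r}_n)\big)=P_p(\bfy)\,(x_1^p,\dots,x_n^p)$ pulls a factor $P_p(\bfy)^{p-1}$ out of $(\partial_{x_i}P_p)$. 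Using $P_p(\bfx)=c\,\bfx^{\bfgamma}$ (so $(\partial_{x_i}P_p)(\bfx)=c\,\gamma_i\,\bfx^{\bfgamma-\bfe_i}$, which vanishes when $\gamma_i=0$), both sides of (\ref{eqbeta1}) become honest monomials times $P_q$, and after cancelling the common factor $c$ the identity reduces to
\[
P_p(\bfy)^{p-1}\,P_q(\bfy)\,S(\bfx)=P_p(\bfx)^{p-1}\,P_q(\bfx)\,S(\bfy),
\]
where $S(\bfx)=\sum_{i}\gamma_i\,\bfx^{p\bfgamma-(p-q)\bfe_i}$ is a nonzero polynomial (the indices $i$ with $\gamma_i\geqslant1$ give distinct monomials with nonnegative exponents, and $|\bfgamma|=p\geqslant2$).

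This is a separation-of-variables identity. Since $\mathcal{R}$ is infinite and $S\not\equiv 0$, I would fix $\bfy_0$ with $S(\bfy_0)\neq0$ and read off $P_p(\bfx)^{p-1}P_q(\bfx)=\mu\,S(\bfx)$ for the constant $\mu=P_p(\bfy_0)^{p-1}P_q(\bfy_0)/S(\bfy_0)$. Dividing by the monomial $P_p(\bfx)^{p-1}=c^{p-1}\bfx^{(p-1)\bfgamma}$ then yields
\[
P_q(\bfx)=\frac{\mu}{c^{p-1}}\sum_i\gamma_i\,\bfx^{\bfgamma-(p-q)\bfe_i}=\frac{\mu}{c^{p}}\,P_p(\bfx)\sum_i\frac{\gamma_i}{x_i^{p-q}}\, .
\]
Evaluating at $\bfx=\boldsymbol{1}$ gives $P_q(\boldsymbol{1})=\tfrac{\mu}{c^{p}}\,P_p(\boldsymbol{1})\,|\bfgamma|=\mu p/c^{p-1}$, whence $\mu/c^{p}=P_q(\boldsymbol{1})/(c\,p)$ and formula~(\ref{Pqform}) follows.

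For the final assertion, I would note that the monomial $\bfx^{\bfgamma-(p-q)\bfe_j}$ appearing in the displayed expression for $P_q$ carries exponent $\gamma_j-(p-q)$ in $x_j$. If $0<\gamma_j<p-q$ this exponent is negative; since the monomials attached to distinct indices cannot cancel and $P_q$ is an honest polynomial, its coefficient $\tfrac{\mu}{c^{p-1}}\gamma_j$ must vanish, forcing $\mu=0$ and hence $P_q=0$. The only real obstacle is bookkeeping: tracking the homogeneity degrees so that the two sides genuinely separate, and respecting the fact that (\ref{Pqform}) is written with negative powers of the $x_j$. One must therefore argue at the level of the genuinely polynomial identity $P_p^{p-1}P_q=\mu\,S$ and only afterwards reinterpret the result as a Laurent-type expression, which is exactly what makes the polynomiality constraint in the ``moreover'' part meaningful.
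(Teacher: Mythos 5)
Your proof is correct, and it reaches (\ref{Pqform}) by the same basic mechanism as the paper---specializing (\ref{eqbeta1}) to a structured matrix and letting homogeneity collapse both sides---but with a different specialization that changes the shape of the argument. The paper takes the matrix whose rows are all equal to $\bfx$, which is exactly your rank-one matrix $x_{ij}=x_iy_j$ with the row vector frozen at $\boldsymbol{1}$; the columns then become $x_j\boldsymbol{1}$, so the constant $P_q(\boldsymbol{1})$ appears at once and a single substitution yields $c\, p\, P_q(\bfx)\, P_p(\bfx)^{p-1}=P_q(\boldsymbol{1})\, P_p(\bfx)^p\sum_i\gamma_i/x_i^{p-q}$, i.e.\ (\ref{Pqform}) after dividing by the monomial $c\,p\,P_p(\bfx)^{p-1}$. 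Your fully general rank-one matrix instead produces the separation-of-variables identity $P_p(\bfy)^{p-1}P_q(\bfy)S(\bfx)=P_p(\bfx)^{p-1}P_q(\bfx)S(\bfy)$, which forces two extra steps the paper avoids: choosing $\bfy_0$ with $S(\bfy_0)\neq 0$ (legitimate, since $\mathcal{R}$ is an infinite field and $S\neq 0$) and evaluating at $\boldsymbol{1}$ afterwards to pin down the constant $\mu$. What your route buys is that it needs no inspired choice of matrix---it mechanically recycles the device of Proposition~\ref{prop:homog}---and the intermediate genuinely polynomial identity $P_p^{p-1}P_q=\mu S$ isolates cleanly where polynomiality enters. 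For the ``moreover'' part the two arguments coincide in substance: your observation that the coefficient $\mu\gamma_j/c^{p-1}$ attached to a monomial with negative $x_j$-exponent must vanish is the coefficient-level version of the paper's comparison of lowest degrees in $x_j$ on both sides of its identity (\ref{eq:gfhfs456}); both rest on $\gamma_j\neq 0$ in characteristic zero and the absence of zero divisors. Your closing caveat---argue with the polynomial identity first, and only then reinterpret the result in Laurent form---is precisely what makes the negative powers in (\ref{Pqform}) and the degeneration $P_q=0$ rigorous, and it matches the paper's implicit handling of the same point.
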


\begin{proof}
Considering Eq.~(\ref{eqbeta1}) for a matrix $X$ such that ${\mathbf r}_i=\bfx$ for $i=1,\ldots,n$, we obtain
\[
c\, p\, P_q(\bfx)\, P_p(\bfx)^{p-1}=P_q(\boldsymbol{1})\sum_{i=1}^nx_i^q(\partial_{x_i}P_p)(c\, x_1^p,\ldots,c\, x_n^p).
\]
Since $\partial_{x_i}P_p(\bfx)=\gamma_i\, P_p(\bfx)/x_i$, %{eq:87w6r}
the previous equation becomes
\begin{equation}\label{eq:gfhfs456}
c\, p\, P_q(\bfx)\, P_p(\bfx)^{p-1}=P_q(\boldsymbol{1})\,P_p(\bfx)^p\,\sum_{i=1}^n \frac{\gamma_i}{x_i^{p-q}}
\end{equation}
from which Eq.~(\ref{Pqform}) follows. Now suppose that $P_q\neq 0$ and let $j\in\{1,\ldots,n\}$. Comparing the lowest degrees in $x_j$ of both sides of (\ref{eq:gfhfs456}), we obtain
$$
(p-1)\,\gamma_j\leqslant
\begin{cases}
p\,\gamma_j-p+q\, , & \mbox{if $\gamma_j\neq 0$},\\
p\,\gamma_j\, , & \mbox{if $\gamma_j= 0$}.
\end{cases}
$$
Therefore, we must have $\gamma_j=0$ or $\gamma_j\geqslant p-q$, which ensures that the right-hand side of (\ref{Pqform}) is a polynomial.
%Finally let us prove that the polynomial (\ref{Pqform}) solves (\ref{eqbeta1}) for every matrix $X$. Substituting (\ref{Pqform}) into (\ref{eqbeta1}) and using (\ref{eq:87w6r}), we obtain
%\begin{multline*}
%\frac{P_q(\boldsymbol{1})}{p\, C}\,\sum_{i=1}^nP_p(\mathbf{r}_i)\,\sum_{j=1}^n\frac{\alpha_j}{x_{ij}^{p-q}} \,\alpha_i\,\frac{P_p(P_p(\mathbf{r}_1),\ldots,P_p(\mathbf{r}_n))}{P_p(\mathbf{r}_i)}\\
%=\frac{P_q(\boldsymbol{1})}{p\, C}\,\sum_{i=1}^n P_p(\mathbf{c}_i)\,\sum_{j=1}^n\frac{\alpha_j}{x_{ji}^{p-q}} \,\alpha_i\,\frac{P_p(P_p(\mathbf{c}_1),\ldots,P_p(\mathbf{c}_n))}{P_p(\mathbf{c}_i)}
%\end{multline*}
%and the result follows from (\ref{eq:Pp}).
\end{proof}

If $\varphi\colon\mathcal{R}\to\mathcal{R}$ is a bijection, we can
associate with every function $f\colon\mathcal{R}^n\to\mathcal{R}$ its \emph{conjugate} $\varphi(f)\colon\mathcal{R}^n\to\mathcal{R}$ defined by
\[
\varphi(f)(x_1,\ldots,x_n)=\varphi^{-1}\big(f(\varphi(x_1),\ldots,\varphi(x_n))\big).
\]
It is clear that $f$ is bisymmetric if and only if so is $\varphi(f)$. We then have the following fact.

\begin{fact}\label{prop:conj}
The class of $n$-variable bisymmetric functions is stable under the action of conjugation.
\end{fact}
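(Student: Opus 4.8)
The plan is to deduce the bisymmetry of the conjugate $g:=\varphi(f)$ directly from that of $f$, the whole point being that the inner copies of $\varphi$ and $\varphi^{-1}$ cancel. Throughout I use the matrix notation of \eqref{eq:matrix}: for a matrix $X$ with rows $\mathbf{r}_i$ and columns $\mathbf{c}_j$, I write $\varphi(\mathbf{r}_i)$ and $\varphi(\mathbf{c}_j)$ for the tuples obtained by applying $\varphi$ to each coordinate, so that these are exactly the rows and columns of the matrix $\varphi(X)=(\varphi(x_{ij}))_{i,j}$. Recall that bisymmetry of a function $h$ is the identity $h(h(\mathbf{r}_1),\ldots,h(\mathbf{r}_n))=h(h(\mathbf{c}_1),\ldots,h(\mathbf{c}_n))$, required to hold for every such $X$.

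First I would unfold the outer evaluation of $g$. From the definition $g(\mathbf{r}_i)=\varphi^{-1}\big(f(\varphi(\mathbf{r}_i))\big)$ we get $\varphi\big(g(\mathbf{r}_i)\big)=f(\varphi(\mathbf{r}_i))$, and feeding this back into the definition of $g$ one more time gives
\[
g\big(g(\mathbf{r}_1),\ldots,g(\mathbf{r}_n)\big)=\varphi^{-1}\Big(f\big(f(\varphi(\mathbf{r}_1)),\ldots,f(\varphi(\mathbf{r}_n))\big)\Big),
\]
together with the analogous formula with columns in place of rows. The key observation is that the inner expression $f(f(\varphi(\mathbf{r}_1)),\ldots,f(\varphi(\mathbf{r}_n)))$ is precisely the bisymmetry expression for $f$ evaluated at the matrix $\varphi(X)$. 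Since $f$ is bisymmetric, this expression is unchanged under the transposition of $\varphi(X)$, i.e.\ when each $\varphi(x_{ij})$ is replaced by $\varphi(x_{ji})$; applying the single-valued map $\varphi^{-1}$ to both sides then yields $g(g(\mathbf{r}_1),\ldots,g(\mathbf{r}_n))=g(g(\mathbf{c}_1),\ldots,g(\mathbf{c}_n))$, which is the bisymmetry of $g$. The converse is obtained by running the same argument with the bijection $\varphi^{-1}$, using $\varphi^{-1}(\varphi(f))=f$.

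I do not expect a genuine obstacle: the computation is a formal cancellation. The only point that deserves care is the bookkeeping in the key observation, namely that conjugating $f$ by $\varphi$ turns the transposition symmetry of $f$ at the matrix $\varphi(X)$ into the transposition symmetry of $g$ at $X$, and that this passes cleanly through the outer $\varphi^{-1}$ precisely because $\varphi^{-1}$ is a well-defined (indeed bijective) map. Note also that the argument never uses that $f$ is a polynomial, so the stability holds for arbitrary functions, as stated.
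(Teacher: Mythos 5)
Your proof is correct and supplies exactly the formal cancellation that the paper treats as immediate: the paper states this Fact without proof, remarking only that ``it is clear that $f$ is bisymmetric if and only if so is $\varphi(f)$.'' Your key identity $g\big(g(\mathbf{r}_1),\ldots,g(\mathbf{r}_n)\big)=\varphi^{-1}\Big(f\big(f(\varphi(\mathbf{r}_1)),\ldots,f(\varphi(\mathbf{r}_n))\big)\Big)$, followed by the appeal to the bisymmetry of $f$ at the matrix $\varphi(X)$ and the application of $\varphi^{-1}$ to both sides, is precisely the intended argument, so your approach coincides with the paper's.
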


Since the Main Theorem involves polynomial functions over a ring, we will only consider conjugations given by translations $\varphi_b(x)=x+b$.

We now show that it is always possible to conjugate $P$ with an appropriate translation $\varphi_b$ to eliminate the terms of degree $p-1$ of the resulting polynomial function $\varphi_b(P)$.

\begin{claim}\label{claimcor1}
There exists $b\in R$ such that $\varphi_b(P)$ has no term of degree $p-1$.
\end{claim}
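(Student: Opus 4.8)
The plan is to combine the explicit form of $P_q$ from Claim~\ref{claim:equiv} with a direct computation of the degree $p-1$ homogeneous component of the translated polynomial. Recall that $\varphi_b(P)(\bfx)=P(\bfx+b\mathbf{1})-b$, where $\mathbf{1}=(1,\ldots,1)$, so the additive constant $-b$ affects only the degree $0$ component and I may focus on $P(\bfx+b\mathbf{1})$. Writing $P=\sum_{k=0}^pP_k$ and applying (\ref{eq:eq4}) to each homogeneous component, I observe that $[P_k(\bfx+b\mathbf{1})]_{p-1}$ vanishes unless $k\in\{p-1,p\}$: the component $P_{p-1}$ contributes its own top part $P_{p-1}(\bfx)$, while $P_p$ contributes, through the $|\bfalpha|=1$ terms of its Taylor expansion, the quantity $b\sum_{i=1}^n(\partial_{x_i}P_p)(\bfx)$. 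Hence
\begin{equation*}
[\varphi_b(P)]_{p-1}=P_{p-1}+b\sum_{i=1}^n\partial_{x_i}P_p .
\end{equation*}

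Next I would rewrite the translation term using (\ref{eq:8sadsf}). Since $\partial_{x_i}P_p=\gamma_i\,P_p/x_i$, we have $\sum_{i=1}^n\partial_{x_i}P_p=P_p(\bfx)\sum_{i=1}^n\gamma_i/x_i$, which is exactly the factor appearing in the formula (\ref{Pqform}) for $P_q$ (after noting that $P_p(\mathbf{1})=c$). The key structural observation is therefore that the degree $p-1$ term produced by translating the monomial $P_p$ is, up to the scalar $b$, the very same polynomial $P_p(\bfx)\sum_i\gamma_i/x_i$ that governs $P_{p-1}$.

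It then remains to split into two cases according to the degree $q$ of $Q=P-P_p$. If $q<p-1$ (in particular if $Q=0$), then $P_{p-1}=0$ and the choice $b=0$ already works. If $q=p-1$, then $P_{p-1}=P_q$ and (\ref{Pqform}) gives $P_{p-1}=\frac{P_{p-1}(\mathbf{1})}{c\,p}\,P_p(\bfx)\sum_i\gamma_i/x_i$; substituting into the displayed expression for $[\varphi_b(P)]_{p-1}$ and factoring out the nonzero polynomial $P_p(\bfx)\sum_i\gamma_i/x_i$, the degree $p-1$ component becomes $\big(\frac{P_{p-1}(\mathbf{1})}{c\,p}+b\big)$ times that polynomial, which vanishes for $b=-P_{p-1}(\mathbf{1})/(c\,p)\in\mathcal{R}$. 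Division by $c\,p$ is legitimate since $\mathcal{R}$ is a field of characteristic zero and $c\neq0$.

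I expect the only delicate point to be the bookkeeping in the first step, namely verifying that no homogeneous component of $P$ other than $P_p$ and $P_{p-1}$ can contribute to degree $p-1$, and that $P_p$ contributes precisely $b\sum_i\partial_{x_i}P_p$. Everything else reduces to matching this against the explicit shape of $P_q$ supplied by Claim~\ref{claim:equiv}; the cancellation is then forced by that compatibility rather than by any further computation.
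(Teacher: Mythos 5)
Your proposal is correct and follows essentially the same route as the paper: compute $[\varphi_b(P)]_{p-1}=P_{p-1}+b\sum_{i=1}^n\partial_{x_i}P_p$ via the Taylor formula (\ref{eq:eq4}), invoke (\ref{Pqform}) in the case $q=p-1$ to identify $P_{p-1}$ as $\frac{P_{p-1}(\mathbf{1})}{c\,p}\sum_i\partial_{x_i}P_p$, and take $b=-P_{p-1}(\mathbf{1})/(c\,p)$ (with $b=0$ when $q<p-1$). Your additional bookkeeping showing that only the components $P_{p-1}$ and $P_p$ can contribute to degree $p-1$ is a valid elaboration of what the paper leaves implicit.
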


\begin{proof}
If $q<p-1$, we take $b=0$. If $q=p-1$, then using (\ref{eq:eq4}) with $\bfy_0=b\mathbf{1}$, we get
$$
\big[\varphi_b(P)\big]_{p-1}=P_{p-1}+b\, \sum_{i=1}^n\partial_{x_i}P_p\, .
$$
On the other hand, by (\ref{Pqform}) we have
\[
P_{p-1}=\frac{P_{p-1}(\boldsymbol{1})}{c\, p}\,\sum_{i=1}^n\partial_{x_i}P_p\, .
\]
It is then enough to choose $b=-P_{p-1}(\boldsymbol{1})/(c\, p)$ and the result follows.
\end{proof}

We can now prove the Main Theorem for polynomial functions of degree $\leqslant 2$.

\begin{proposition}\label{prop:67sfda}
The Main Theorem is true when $\mathcal{R}$ is a field of characteristic zero and $P$ is a polynomial function of degree $\leqslant 2$.
\end{proposition}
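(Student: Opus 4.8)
The plan is to run through the two possible shapes of the leading monomial. Since we are in the range $p\geqslant 2$ while $\deg P\leqslant 2$, we have $p=2$, and Claim~\ref{claim:homog} gives $P_2(\bfx)=c\,\bfx^{\bfgamma}$ with $c\neq 0$ and $|\bfgamma|=2$. There are only two options for $\bfgamma$: either $\bfgamma=2\,\bfe_i$ for some $i$ (a perfect square), or $\bfgamma=\bfe_i+\bfe_j$ for distinct $i,j$ (a product of two distinct variables). I would treat these separately.

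In the square case $P_2(\bfx)=c\,x_i^2$, the leading part depends on the single variable $x_i$, so Claim~\ref{claim:8243} forces $P$ itself to depend on $x_i$ only; hence $P$ is univariate and lies in class $(i)$, with nothing more to check. The real content is the product case $P_2(\bfx)=c\,x_ix_j$ with $i\neq j$, where Claim~\ref{claim:8243} tells me that $P$ depends only on $x_i$ and $x_j$. I would then use Claim~\ref{claimcor1} to replace $P$ by its conjugate $\tilde P=\varphi_b(P)$, which is again bisymmetric of degree $2$, still has leading monomial $c\,x_ix_j$, depends only on $x_i$ and $x_j$, and has no term of degree $1$. Together these facts pin the shape of $\tilde P$ down to $\tilde P(\bfx)=c\,x_ix_j+d$ for a single unknown constant $d\in\mathcal{R}$.

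The crux, and the only place where bisymmetry is used beyond the structural claims already established, is to show that $d=0$. I would deduce this by applying Claim~\ref{claim:equiv} to $\tilde P$: here $\tilde Q=\tilde P-\tilde P_2=d$, so if $d\neq 0$ its degree is $q=0$, and since $0<\gamma_i=1<p-q=2$, the ``moreover'' part of Claim~\ref{claim:equiv} forces the degree-$q$ component $d$ to vanish, a contradiction. Hence $d=0$ and $\tilde P(\bfx)=c\,x_ix_j$. Undoing the translation then gives
\[
P(\bfx)=\tilde P(\bfx-b\,\mathbf{1})+b=c\,(x_i-b)(x_j-b)+b,
\]
which is precisely a function of class $(iii)$ with $a=c$, exponent vector $\bfalpha=\bfe_i+\bfe_j$, and the constant of $(iii)$ equal to $-b$. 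Since $\mathcal{R}$ is a field, the integrality side-conditions on $a$, $b$ and $\bfalpha$ in $(iii)$ hold automatically, so the Main Theorem holds in this case. I expect the reduction of $\tilde P$ to the form $c\,x_ix_j+d$ and the ensuing vanishing of $d$ to be the only delicate points; the square case and the final matching with class $(iii)$ are routine.
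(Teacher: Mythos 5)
Your proof is correct and takes essentially the same route as the paper's: Claim~\ref{claim:homog} to make $P_2$ a monomial, Claim~\ref{claim:8243} to settle the square case as class $(i)$, and Claim~\ref{claimcor1} together with the ``moreover'' part of Claim~\ref{claim:equiv} (with $q=0$, $\gamma_i=1<p-q=2$) to kill the constant term in the product case. The only differences are cosmetic: you split into cases before conjugating where the paper conjugates first, and you spell out the un-conjugation into the class-$(iii)$ form, which the paper leaves implicit.
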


\begin{proof}
Let $P$ be a bisymmetric polynomial function of degree $p\leqslant 2$. If $p\leqslant 1$, then $P$ is in class $(ii)$ of the Main Theorem. If $p=2$, then by Claim~\ref{claimcor1} we see that $P$ is (up to conjugation) of the form
$P(\bfx)=c_2\, x_i\, x_j+c_0$. If $i=j$, then by Claim~\ref{claim:8243} we see that $P$ is a univariate polynomial function, which corresponds to the class $(i)$. If $i\not=j$, then by Claim~\ref{claim:equiv} we have $c_0=0$ and hence $P$ is a monomial (up to conjugation).
\end{proof}

By Proposition~\ref{prop:67sfda} we can henceforth assume that $p\geqslant 3$. We also assume that $P$ is a bivariate polynomial function. The general case will be proved by induction on the number of essential variables of $P$.

\begin{proposition}\label{prop:s8df6}
The Main theorem is true when $\mathcal{R}$ is a field of characteristic zero and $P$ is a bivariate polynomial function.
\end{proposition}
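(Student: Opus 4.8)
The plan is to show that a bivariate bisymmetric $P$ of degree $p\geqslant 3$ reduces, after a translation, to its leading monomial, and then to undo the translation so as to land in class $(iii)$. First I would collect the reductions already at hand. If $P$ depends on a single variable it is in class $(i)$, so by Claim~\ref{claim:8243} I may assume both variables are essential; then Claim~\ref{claim:homog} and (\ref{eq:8sadsf}) give $P_p(\bfx)=c\,x_1^{\gamma_1}x_2^{\gamma_2}$ with $\gamma_1,\gamma_2\geqslant 1$ and $\gamma_1+\gamma_2=p$. Using Fact~\ref{prop:conj} together with Claim~\ref{claimcor1}, I conjugate $P$ by a translation $\varphi_b$ so that the resulting polynomial function is still bisymmetric and bivariate, keeps the same leading monomial, and has no term of degree $p-1$. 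Since conjugating the monomial $c\,x_1^{\gamma_1}x_2^{\gamma_2}$ back by $\varphi_{-b}$ produces exactly a member $c\,(x_1-b)^{\gamma_1}(x_2-b)^{\gamma_2}+b$ of class $(iii)$ (the integrality side conditions being vacuous over a field), it suffices to prove that the conjugated function equals its leading monomial $P_p$.

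So I assume $P_{p-1}=0$ and set $Q=P-P_p$; the goal becomes $Q=0$. Suppose not, and let $q=\deg Q\leqslant p-2$, so that $P_q\neq 0$. Claim~\ref{claim:equiv} then yields the explicit two-monomial shape of $P_q$ together with the constraint $\gamma_1,\gamma_2\geqslant p-q$; adding these inequalities and using $\gamma_1+\gamma_2=p$ forces $q\geqslant p/2$, whence $p/2\leqslant q\leqslant p-2$. For $p=3$ this is already impossible, so $Q=0$ and $P=P_p$. The case $p\geqslant 4$ is the real content.

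For $p\geqslant 4$ I would push the homogeneous-component analysis of (\ref{eq:bisym}) below degree $p^2-(p-q)$. Throughout the range $p\,q<d\leqslant p^2$ the composite $Q(P(\mathbf{r}_1),P(\mathbf{r}_2))$ does not yet contribute, so each such component comes only from $P_p(P(\mathbf{r}_1),P(\mathbf{r}_2))$; by the expansion (\ref{eq:eq6}) the component of degree $d$ is linear in the single lowest still-unknown component $P_{\,d-p(p-1)}$ of $P$, its inhomogeneous part being assembled from the already-determined higher components. This triangular cascade recursively constrains $P_{q-1},\dots,P_0$. The decisive step is the component of (\ref{eq:bisym}) at the critical degree $d=p\,q$, the first degree at which $Q(P(\mathbf{r}_1),P(\mathbf{r}_2))$ enters: substituting the values forced for the lower components, one expects a nonzero obstruction, exhibited — as in the proof of Proposition~\ref{prop:homog} — by isolating a single suitably chosen monomial, which contradicts $P_q\neq 0$. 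A convenient way to concentrate this information is to first specialize (\ref{eq:bisym}) to matrices whose two rows coincide, collapsing it to the two-variable identity $D(P(x_1,x_2))=P(D(x_1),D(x_2))$ with $D(t)=P(t,t)=c\,t^p+\sum_{k<p}P_k(\boldsymbol{1})\,t^k$ (no $t^{p-1}$ term); since by (\ref{Pqform}) the function $P_q$ is a fixed nonzero polynomial times the scalar $P_q(\boldsymbol{1})$, we have $P_q\neq 0$ if and only if $P_q(\boldsymbol{1})\neq 0$, and forcing $D(t)=c\,t^p$ would make the coefficient $P_q(\boldsymbol{1})$ of $t^q$ vanish, the desired contradiction.

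The main obstacle is exactly this degree-$p\,q$ analysis. Above degree $p\,q$ the leading monomial $P_p$ alone governs the composition, which is why the earlier claims suffice; at and below $p\,q$ the non-leading part $Q$ feeds back through $P$, and the intermediate components $P_{q-1},\dots,P_0$, which are not controlled by any earlier claim, genuinely intervene (a direct check shows that no single monomial at degree $p\,q$ is free of their contribution). Thus the technical heart is to organize the triangular recursion for these components cleanly — equivalently, to prove that the univariate $D$ is the pure monomial $c\,t^p$ — and to extract from the degree-$p\,q$ identity a definite nonvanishing coefficient. Once $Q=0$ is established, undoing the translation returns $P$ to class $(iii)$, completing the proof.
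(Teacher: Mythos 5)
Your reductions are sound and coincide with the opening of the paper's proof: dependence on both variables (Claim~\ref{claim:8243}), the monomial form $P_p(x,y)=c\,x^{\gamma_1}y^{\gamma_2}$, the conjugation killing $P_{p-1}$ (Claim~\ref{claimcor1}), the remark that the integrality side conditions of class $(iii)$ are vacuous over a field, and the consequence of Claim~\ref{claim:equiv} that $P_q\neq 0$ forces $\gamma_1,\gamma_2\geqslant p-q$, which correctly disposes of $p=3$ (and, more generally, of $\gamma_1=1$ or $\gamma_2=1$). The problem is that for the remaining case --- $p\geqslant 4$ with $\gamma_1,\gamma_2\geqslant 2$ --- you have no proof, only a plan: you describe a ``triangular cascade'' of constraints on $P_{q-1},\ldots,P_0$, say that at the critical degree $d=p\,q$ ``one expects a nonzero obstruction,'' and you explicitly defer what you yourself call the technical heart, namely proving $D(t)=c\,t^p$ for $D(t)=P(t,t)$. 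Expectation is not an argument: nothing in the proposal actually solves the cascade, pins down the intermediate components (which, as you note, genuinely enter at degree $p\,q$), or exhibits a definite nonvanishing coefficient. This is precisely where all the work of the proposition lies, and it is missing.

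Moreover, the route you sketch may be too weak even in principle. Specializing bisymmetry to matrices with equal rows retains only the diagonal identity $D(P(x,y))=P(D(x),D(y))$, whereas the paper's argument needs the full four-variable identity $P(P(z,t),P(x,y))=P(P(z,x),P(t,y))$. In the paper, the equal-rows specialization is used only in Step~\ref{Claim:1}, to bound the partial degrees of $P$ (degree $\leqslant\gamma_1$ in $x$ and $\leqslant\gamma_2$ in $y$); the decisive Step~\ref{step2} extracts the coefficient of $z^{\gamma_1^2}$, and then of $t^{\gamma_1\gamma_2}$, from the four-variable identity to obtain the factorization $P(x,y)=U(x)\,V(y)$, and Step~\ref{step3} then forces $U$ and $V$ to be monomials, giving $P=P_p$. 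The separation of the four variables is exactly what your diagonal specialization destroys, and you offer no substitute for it. To complete your proposal you would have to either carry out the degree-$p\,q$ computation in full generality (including a proof that the cascade really determines the lower components), or abandon the diagonal reduction and, as the paper does, exploit the full identity to prove a factorization statement first.
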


\begin{proof}
Let $P$ be a bisymmetric bivariate polynomial function of degree $p\geqslant 3$. We know that $P_p$ is of the form $P_p(x,y)=c\, x^{\gamma_1} y^{\gamma_2}$. If $\gamma_1\,\gamma_2=0$, then by Claim~\ref{claim:8243} we see that $P$ is a univariate polynomial function, which corresponds to the class $(i)$.

Conjugating $P$, if necessary, we may assume that $P_{p-1}=0$ (by Claim~\ref{claimcor1}) and it is then enough to prove that $P=P_p$ (i.e., $P_q=0$). If $\gamma_1=1$ or $\gamma_2=1$, then the result follows immediately from Claim~\ref{claim:equiv} since $p-q\geqslant 2$. We may therefore assume that $\gamma_1\geqslant 2$ and $\gamma_2\geqslant 2$. We now prove that $P=P_p$ in three steps.

\begin{step}\label{Claim:1}
$P(x,y)$ is of degree $\leqslant\gamma_1$ in $x$ and of degree $\leqslant\gamma_2$  in $y$.
\end{step}

\begin{proof}
We prove by induction on $r\in\{0,\ldots,p-1\}$ that $P_{p-r}(x,y)$ is of degree $\leqslant\gamma_1$ in $x$ and of degree $\leqslant\gamma_2$ in $y$. The result is true by our assumptions for $r=0$ and $r=1$ and is obvious for $r=p$. Considering Eq.~(\ref{eq:bisympp}) for $d=p^2-r>p\, q$, with $\mathbf{r}_1=\mathbf{r_2}=(x,y)$, we obtain
\begin{equation}\label{eq:ident1}
\left[P(x,y)^p\right]_{p^2-r}=\left[P(x,x)^{\gamma_1}\, P(y,y)^{\gamma_2}\right]_{p^2-r}\, .
\end{equation}
Clearly, the right-hand side of (\ref{eq:ident1}) is a polynomial function of degree $\leqslant p\,\gamma_1$ in $x$ and $\leqslant p\,\gamma_2$ in $y$. Using the multinomial theorem, the left-hand side of (\ref{eq:ident1}) becomes
\[
\left[P(x,y)^p\right]_{p^2-r}=\left[\left(\sum_{k=0}^pP_{p-k}(x,y)\right)^p\right]_{p^2-r}=\sum_{\bfalpha\in A_{p,r}} {p\choose\bfalpha}\prod_{k=0}^pP_{p-k}(x,y)^{\alpha_k}\, ,
\]
where
\[
A_{p,r}=\Big\{\bfalpha=(\alpha_0,\ldots,\alpha_p)\in\N^{p+1}:\sum_{k=0}^pk\, \alpha_k=r,\, |\bfalpha|=p\Big\}.
\]
Observing that for every $\bfalpha\in A_{p,r}$ we have $\alpha_k=0$ for $k>r$ and $\alpha_r\not=0$ only if $\alpha_r=1$ and $\alpha_0=p-1$, we can rewrite (\ref{eq:ident1}) as
\[
p\, P_p(x,y)^{p-1}\, P_{p-r}(x,y)=\left[P(x,x)^{\gamma_1} P(y,y)^{\gamma_2}\right]_{p^2-r}-\sum_{\textstyle{\bfalpha\in A_{p,r}\atop \alpha_r=\cdots =\alpha_p=0}} {p \choose \bfalpha}\prod_{k=0}^{r-1}P_{p-k}(x,y)^{\alpha_k}\, .
\]
By induction hypothesis, the right-hand side is of degree $\leqslant p\,\gamma_1$ in $x$ and of degree $\leqslant p\,\gamma_2$  in $y$. The result then follows by analyzing the highest degree terms in $x$ and $y$ of the left-hand side.
\end{proof}

\begin{step}\label{step2}
$P(x,y)$ factorizes into a product $P(x,y)=U(x)\, V(y).$
\end{step}

\begin{proof}
By Step~\ref{Claim:1}, we can write
\[
P(x,y)=\sum_{k=0}^{\gamma_1}x^k\,V_k(y)\, ,
\]
where $V_k$ is of degree $\leqslant\gamma_2$ and $V_{\gamma_1}(y)=\sum_{j=0}^{\gamma_2}c_{{\gamma_2}-j}\, y^j$, with $c_0=c\not=0$ and $c_1=0$ (since $P_{p-1}=0$). Equating the terms of degree $\gamma_1^2$ in $z$ in the identity
\[
P(P(z,t),P(x,y))=P(P(z,x),P(t,y))\, ,
\]
we obtain
$$
V_{\gamma_1}(t)^{\gamma_1}\,V_{\gamma_1}(P(x,y))=V_{\gamma_1}(x)^{\gamma_1}\,V_{\gamma_1}(P(t,y)).
$$
Equating now the terms of degree $\gamma_1\gamma_2$ in $t$ in the latter identity, we obtain
\begin{equation}\label{eq:ident2}
c^{\gamma_1}\, V_{\gamma_1}(P(x,y))=c\, V_{\gamma_1}(x)^{\gamma_1}\, V_{\gamma_1}(y)^{\gamma_2}\, .
\end{equation}
We now show by induction on $r\in\{0,\ldots,\gamma_1\}$ that every polynomial function $V_{\gamma_1-r}$ is a multiple of $V_{\gamma_1}$ (the case $r=0$ is trivial), which is enough to prove the result. To do so, we equate the terms of degree $\gamma_1\gamma_2-r$ in $x$ in (\ref{eq:ident2}) (by using the explicit form of $V_{\gamma_1}$ in the left-hand side). Note that terms with such a degree in $x$ can appear in the expansion of $V_{\gamma_1}(P(x,y))$ only when $P(x,y)$ is raised to the highest power $\gamma_2$ (taking into account the condition $c_1 = 0$ when $r =\gamma_1$). Thus, we obtain
$$
c^{\gamma_1+1}\,\left[\left(\sum_{k=0}^{\gamma_1}x^{\gamma_1-k}\, V_{\gamma_1-k}(y)\right)^{\gamma_2}\right]_{\gamma_1\gamma_2-r}=c\,[V_{\gamma_1}(x)^{\gamma_1}]_{\gamma_1\gamma_2-r}V_{\gamma_1}(y)^{\gamma_2}\, ,
$$
(here the notation $[\cdot]_{\gamma_1\gamma_2-r}$ concerns only the degree in $x$). By computing the left-hand side (using the multinomial theorem as in the proof of Step~\ref{Claim:1}) and using the induction on $r$, we finally obtain an identity of the form
$$
a\, V_{\gamma_1}(y)^{\gamma_2-1}\, V_{\gamma_1-r}(y)=a'\,V_{\gamma_1}(y)^{\gamma_2},\qquad a,a'\in\mathcal{R},\, a\neq 0,
$$
from which the result immediately follows.
\end{proof}

\begin{step}\label{step3}
$U$ and $V$ are monomial functions.
\end{step}

\begin{proof}
Using (\ref{eq:ident2}) with $P(x,y)=U(x)\, V(y)$ and $V_{\gamma_1}=V$, we obtain
\begin{equation}\label{eq:s8fd7af6}
c^{\gamma_1}\,\sum_{j=0}^{\gamma_2}c_{\gamma_2-j}\,(U(x)\,V(y))^j=c\, V(x)^{\gamma_1}\, V(y)^{\gamma_2}.
\end{equation}
Equating the terms of degree $\gamma_2^2$ in $y$ in (\ref{eq:s8fd7af6}), we obtain
\begin{equation}\label{eq:s8d7af6}
c^{\gamma_1+\gamma_2+1}\, U(x)^{\gamma_2}=c^{\gamma_2+1}\, V(x)^{\gamma_1}\, .
\end{equation}
Therefore, (\ref{eq:s8fd7af6}) becomes
$$
\sum_{j=0}^{\gamma_2-1}c_{\gamma_2-j}\,(U(x)\,V(y))^j=0,
$$
which obviously implies $c_k=0$ for $k=1,\ldots,\gamma_2$, which in turn implies $V(x)=c\, x^{\gamma_2}$. Finally, from (\ref{eq:s8d7af6}) we obtain $U(x)=x^{\gamma_1}$.
\end{proof}

\noindent Steps~\ref{step2} and \ref{step3} together show that $P=P_p$, which establishes the proposition.
\end{proof}

Recall that the action of the symmetric group $\mathfrak{S}_n$ on functions from $\mathcal{R}^n$ to $\mathcal{R}$ is defined by
\[
\sigma(f)(x_1,\ldots,x_n)=f(x_{\sigma(1)},\ldots,x_{\sigma(n)}),\qquad\sigma\in\mathfrak{S}_n.
\]
It is clear that $f$ is bisymmetric if and only if so is $\sigma(f)$. We then have the following fact.

\begin{fact}\label{prop:sym}
The class of $n$-variable bisymmetric functions is stable under the action of the symmetric group $\mathfrak{S}_n$.
\end{fact}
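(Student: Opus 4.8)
The plan is to verify the bisymmetry identity (\ref{eq:bisym}) directly for $g=\sigma(f)$, reducing it to the same identity for $f$ evaluated at a suitably permuted matrix. Recall that a function $h$ is bisymmetric precisely when $h(h(\mathbf{r}_1),\ldots,h(\mathbf{r}_n))=h(h(\mathbf{c}_1),\ldots,h(\mathbf{c}_n))$ for every matrix $X$ as in (\ref{eq:matrix}); I want to establish this for $h=g$ assuming it for $h=f$.

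First I would unwind the left-hand side. Since $g(\mathbf{r}_i)=f(x_{i\sigma(1)},\ldots,x_{i\sigma(n)})$, applying the definition of the action once more gives
\[
g\big(g(\mathbf{r}_1),\ldots,g(\mathbf{r}_n)\big)=f\big(g(\mathbf{r}_{\sigma(1)}),\ldots,g(\mathbf{r}_{\sigma(n)})\big)=f\big(f(\mathbf{r}'_1),\ldots,f(\mathbf{r}'_n)\big),
\]
where $\mathbf{r}'_k$ is the $k$th row of the matrix $X^\sigma$ whose $(i,j)$-entry is $x_{\sigma(i)\sigma(j)}$. Carrying out the identical computation on the columns yields $g(g(\mathbf{c}_1),\ldots,g(\mathbf{c}_n))=f(f(\mathbf{c}'_1),\ldots,f(\mathbf{c}'_n))$, where $\mathbf{c}'_l$ is the $l$th column of the \emph{same} matrix $X^\sigma$. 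In other words, the two sides of the bisymmetry identity for $g$ at $X$ are exactly the row- and column-evaluations of $f$ at $X^\sigma$.

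It then suffices to apply the bisymmetry of $f$ to the matrix $X^\sigma$, which gives $f(f(\mathbf{r}'_1),\ldots,f(\mathbf{r}'_n))=f(f(\mathbf{c}'_1),\ldots,f(\mathbf{c}'_n))$ and hence the desired identity for $g$. Conceptually, the action of $\sigma$ on $f$ corresponds to the simultaneous permutation $X\mapsto X^\sigma$ of the rows and columns of the argument matrix, and this conjugation commutes with transposition; for this reason there is no real obstacle here, and the only care required is in tracking the double indices correctly.
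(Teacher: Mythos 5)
Your proof is correct and is exactly the direct verification the paper has in mind: the paper simply declares the fact ``clear'' (since $f$ is bisymmetric if and only if $\sigma(f)$ is), and your index computation showing that bisymmetry of $\sigma(f)$ at $X$ is bisymmetry of $f$ at the simultaneously permuted matrix $X^\sigma$ is the natural way to make that precise. No gaps; the bookkeeping with the double indices is handled correctly.
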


Consider also the following action of identification of variables. For $f\colon\mathcal{R}^n\to\mathcal{R}$ and $i<j\in [n]$ we define the function $I_{i,j}f\colon\mathcal{R}^{n-1}\to\mathcal{R}$ as
\[
(I_{i,j}f)(x_1,\ldots,x_{n-1})=f(x_1,\ldots,x_{j-1},x_i,x_{j},\ldots,x_{n-1}).
\]
This action amounts to considering the restriction of $f$ to the ``subspace of equation $x_i=x_j$'' and then relabeling the variables. By Fact~\ref{prop:sym} it is enough to consider the identification of the first and second variables, that is,
\[
(I_{1,2}f)(x_1,\ldots,x_{n-1})=f(x_1,x_1,x_2\ldots,x_{n-1}).
\]

\begin{proposition}\label{prop:ident}
The class of $n$-variable bisymmetric functions is stable under identification of variables.
\end{proposition}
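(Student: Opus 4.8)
The plan is to reduce, via Fact~\ref{prop:sym}, to the identification of the first two variables, and then to realize the bisymmetry identity for $g=I_{1,2}f$ on an arbitrary $(n-1)\times(n-1)$ matrix as a special instance of the bisymmetry identity for $f$ on a suitably ``inflated'' $n\times n$ matrix. In other words, I would engineer an $n\times n$ matrix whose rows and columns, when fed to $f$, reproduce exactly the quantities occurring in the bisymmetry identity for $g$, so that the known bisymmetry of $f$ collapses onto the desired identity for $g$.

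Concretely, let $g=I_{1,2}f$, so that $g(z_1,\ldots,z_{n-1})=f(z_1,z_1,z_2,\ldots,z_{n-1})$, and let $Y=(y_{ij})_{i,j=1}^{n-1}$ be an arbitrary $(n-1)\times(n-1)$ matrix with rows $\mathbf{r}'_i$ and columns $\mathbf{c}'_j$. I define $\phi\colon\{1,\ldots,n\}\to\{1,\ldots,n-1\}$ by $\phi(1)=\phi(2)=1$ and $\phi(k)=k-1$ for $k\geqslant 3$, and I build the $n\times n$ matrix $X=(x_{ij})$ via $x_{ij}=y_{\phi(i),\phi(j)}$; equivalently, $X$ is obtained from $Y$ by duplicating its first row and its first column, so that rows $1$ and $2$ of $X$ coincide and columns $1$ and $2$ of $X$ coincide. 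The $i$th row of $X$ is then $(y_{\phi(i),1},y_{\phi(i),1},y_{\phi(i),2},\ldots,y_{\phi(i),n-1})$, whence $f(\mathbf{r}_i)=g(\mathbf{r}'_{\phi(i)})$, and symmetrically $f(\mathbf{c}_j)=g(\mathbf{c}'_{\phi(j)})$.

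The key computation is now immediate: the tuple $(f(\mathbf{r}_1),\ldots,f(\mathbf{r}_n))$ equals $(g(\mathbf{r}'_1),g(\mathbf{r}'_1),g(\mathbf{r}'_2),\ldots,g(\mathbf{r}'_{n-1}))$, and applying $f$ to this tuple, whose first two entries coincide, yields precisely $g(g(\mathbf{r}'_1),\ldots,g(\mathbf{r}'_{n-1}))$ by the definition of $g$. The same reasoning on columns gives $f(f(\mathbf{c}_1),\ldots,f(\mathbf{c}_n))=g(g(\mathbf{c}'_1),\ldots,g(\mathbf{c}'_{n-1}))$. Since $f$ is bisymmetric, the two $f$-expressions agree for the matrix $X$, and therefore the two $g$-expressions agree for $Y$. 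As $Y$ is arbitrary, $g$ is bisymmetric, completing the argument.

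The construction is essentially forced once one observes that the identification built into $g$ must be matched simultaneously in two places: in the inner evaluations, which is why column $1$ of $Y$ is duplicated, and in the outer evaluation, which is why row $1$ of $Y$ is duplicated. I do not expect any genuine obstacle here, since the argument is a clean substitution; the only point requiring care is the bookkeeping showing that the duplicated first entry of the outer tuple $(f(\mathbf{r}_1),\ldots,f(\mathbf{r}_n))$ agrees with the first-variable identification in the outer application of $g$, which is exactly what forces rows $1$ and $2$ of $X$ to be equal. Thus the main ``difficulty'' is merely choosing the inflation $X$ correctly and verifying the indexing.
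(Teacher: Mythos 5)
Your proof is correct and takes exactly the paper's approach: the inflated matrix $X$ you build (duplicating the first row and first column of $Y$, i.e., $x_{ij}=y_{\phi(i),\phi(j)}$) is precisely the matrix to which the paper applies the bisymmetry of $f$, and your reduction to $I_{1,2}$ via Fact~\ref{prop:sym} mirrors the paper as well. You simply spell out the indexing verification that the paper leaves implicit.
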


\begin{proof}
To see that $I_{1,2}f$ is bisymmetric, it is enough to apply the bisymmetry of $f$ to the $n\times n$ matrix
\[
\begin{pmatrix}
x_{1,1} & x_{1,1} & \cdots & x_{1,n-1}\\
x_{1,1} & x_{1,1} & \cdots & x_{1,n-1}\\
\vdots & \vdots & \ddots & \vdots\\
x_{n-1,1} & x_{n-1,1} & \cdots & x_{n-1,n-1}
\end{pmatrix}\, .
\]
To see that $I_{i,j}f$ is bisymmetric, we can similarly consider the matrix whose rows $i$ and $j$ are identical and the same for the columns (or use Fact~\ref{prop:sym}).
\end{proof}

We now prove the Main Theorem by using both a simple induction on the number of essential variables of $P$ and the action of identification of variables.

\begin{proof}[Proof of the Main Theorem when $\mathcal{R}$ is a field]
We proceed by induction on the number of essential variables of $P$. By Proposition~\ref{prop:s8df6} the result holds when $P$ depends on $1$ or $2$ variables only. Let us assume that the result also holds when $P$ depends on $n-1$ variables ($n-1\geqslant 2$) and let us prove that it still holds when $P$ depends on $n$ variables. By Proposition~\ref{prop:67sfda} we may assume that $P$ is of degree $p \geqslant 3$. We know that $P_p(\bfx)=c\, \bfx^{\bfgamma}$, where $c\neq 0$ and $\gamma_i>0$ for $i=1,\ldots,n$ (cf.\ Claim~\ref{claim:8243}). Up to a conjugation we may assume that $P_{p-1}=0$ (cf.\ Claim~\ref{claimcor1}). Therefore, we only need to prove that $P=P_p$. Suppose on the contrary that $P-P_p$ has a polynomial function $P_q\neq 0$ as the homogeneous component of highest degree. Then the polynomial function $I_{1,2}\, P$ has $n-1$ essential variables, is bisymmetric (by Proposition~\ref{prop:ident}), has $I_{1,2}\, P_p$ as the homogeneous component of highest degree (of degree $p\geqslant 3$), and has no component of degree $p-1$. By induction hypothesis, $I_{1,2}\, P$ is in class $(iii)$ of the Main Theorem with $b=0$ (since it has no term of degree $p-1$) and hence it should be a monomial function. However, the polynomial function $[I_{1,2}\, P]_q=I_{1,2}\, P_q$ is not zero by (\ref{Pqform}), hence a contradiction.
\end{proof}

\begin{proof}[Proof of the Main Theorem when $\mathcal{R}$ is an integral domain]
Using the identification of polynomials and polynomial functions, we can extend every bisymmetric polynomial function over an integral domain $\mathcal{R}$ with identity to a polynomial function on $\mathrm{Frac}(\mathcal{R})$. The latter function is still bisymmetric since the bisymmetry property for polynomial functions is defined by a set of polynomial equations on the coefficients of the polynomial functions. Therefore, every bisymmetric polynomial function over $\mathcal{R}$ is the restriction to $\mathcal{R}$ of a bisymmetric polynomial function over $\mathrm{Frac}(\mathcal{R})$. We then conclude by using the Main Theorem for such functions.
\end{proof}

%---------------------------------------------------------------------------------------------- Acknowledgments
\section*{Acknowledgments}

The authors wish to thank J.\ Dasc\u{a}l and E.\ Lehtonen for fruitful discussions. This research is supported by the internal research project F1R-MTH-PUL-12RDO2 of the University of Luxembourg.

%Jean-Luc Marichal acknowledges support by the internal research project F1R-MTH-PUL-09MRDO of the University of Luxembourg.

\end{document}